\newtheorem{theorem}{Theorem}[section]
\newtheorem{lemma}[theorem]{Lemma}
\newtheorem{proposition}[theorem]{Proposition}
\theoremstyle{definition}
\renewcommand{\leq}{\leqslant}
\renewcommand{\geq}{\geqslant}
\newcommand\sml{\operatorname{small}}
\def\R{\mathbb{R}}
\def\Z{\mathbf{Z}}
\def\N{\mathbf{N}}
\def\eps{\varepsilon}
\numberwithin{equation}{section}
\begin{document}

\title[Extremal problems for GCDs]{Extremal problems for GCDs}



\author[Green]{Ben Green}
\address{Mathematical Institute \\ Andrew Wiles Building \\ Radcliffe Observatory Quarter \\ Woodstock Rd \\ Oxford OX2 6QW}
\email{ben.green@maths.ox.ac.uk}

\author[Walker]{Aled Walker}
\address{Trinity College \\ Trinity Street \\ Cambridge CB2 1TQ}
\email{aw530@cam.ac.uk}

\thanks{The first-named author is supported by a Simons Investigator Award and is grateful to the Simons Foundation for their support. The second-named author is supported by a Postdoctoral Fellowship with the Centre de Recherches Math\'{e}matiques and by a Junior Research Fellowship from Trinity College Cambridge. We would also like to thank Andrew Granville, Dimitris Koukoulopoulos, and James Maynard for interesting conversations related to this work. }

\subjclass[2000]{Primary }

\begin{abstract}
We prove that if $A \subseteq [X, 2X]$ and $B \subseteq [Y, 2Y]$ are sets of integers such that $\gcd(a,b) \geq D$ for at least $\delta |A||B|$ pairs $(a,b) \in A \times B$ then $|A||B| \ll_{\eps} \delta^{-2 - \eps} XY/D^2$. This is a new result even when $\delta = 1$. The proof uses ideas of Koukoulopoulos and Maynard and some additional combinatorial arguments.
\end{abstract}
\maketitle

\section{Introduction and proof strategy}

Fix $\eps \in (0,1)$ throughout the paper; implied constants and thresholds may depend on $\eps$ but are otherwise absolute. Let $p_0$ be a threshold which (at the start of Section \ref{sec4}) will be taken to be sufficiently large. Given a finite set $S \subset \N$, write $\mathscr{P}(S)$ for the set of primes dividing some element of $S$, and $\mathscr{P}_{\sml}(S)$ for the set of primes $p \leq p_0$ dividing some element of $S$. 

Our main result is the following.

\begin{theorem}
\label{mainthm} Let $X,Y, D \in [1,\infty)$, and suppose that $D \leq \min(X,Y)$. Let $\delta \in (0,1]$. Suppose that $A \subset [X, 2X]$ and $B \subset [Y, 2Y]$ have the following property: for at least $\delta |A| |B|$ pairs $(a,b) \in A \times B$, $\gcd(a,b) \geq D$. Then we have the bound
\[ |A| |B| \leq (1000)^{1+ \# \mathscr{P}_{\sml}(A \cup B)} \delta^{-2 - \eps} \frac{XY}{D^2} .\]
\end{theorem}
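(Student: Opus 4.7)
My plan is to encode the hypothesis as a bipartite GCD graph $G = (A, B, E)$ with edge set $E = \{(a,b) \in A \times B : \gcd(a,b) \geq D\}$, and then to follow the graph-regularization strategy of Koukoulopoulos and Maynard. Define the \emph{quality} $q(G) := |E|^2 D^2 /(|A||B| X Y)$. The hypothesis $|E| \geq \delta |A||B|$ immediately gives $q(G) \geq \delta^2 |A| |B| D^2 /(XY)$, so establishing an upper bound of the form $q(G) \leq 1000^{1+\#\mathscr{P}_{\sml}(A \cup B)} \delta^{-\varepsilon}$ would imply the conclusion of Theorem \ref{mainthm}. The proof thus reduces to producing such an upper bound on $q$.

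First I would perform a pigeonhole refinement tailored to small primes. For each $p \in \mathscr{P}_{\sml}(A \cup B)$, I would partition $A$ according to the value of $v_p(a)$ and similarly $B$ according to $v_p(b)$, and restrict to a pair of maximally populated classes. By choosing $p_0$ so that the number of relevant exponent classes per small prime is bounded by an absolute constant, after iterating over all of $\mathscr{P}_{\sml}(A \cup B)$ we lose a factor of at most $1000^{\#\mathscr{P}_{\sml}(A \cup B)}$ in the quality. For larger primes I would do a cheaper dyadic pigeonhole on the shared exponents $\min(v_p(a), v_p(b))$; since an integer in $[N, 2N]$ has boundedly many large prime factors, the total loss from this stage is only $\delta^{-O(\varepsilon)}$. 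At the end of this step, the constraint $\gcd(a,b) \geq D$ has been replaced by a structured divisibility condition in which the prime factorization profiles of $a$ and $b$ are essentially fixed across all edges.

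Next I would run the Koukoulopoulos--Maynard iterative regularization on the refined graph: whenever the degree distribution on either vertex set is too irregular one can pass to a subgraph of strictly larger $q$, so after finitely many such passes the graph is regular. On a regular graph the core KM-type inequality---exploiting that for sufficiently large primes $p$, the density of multiples of $p$ in a set $A \subset [X, 2X]$ is close to the expected $1/p$---should give $q(G) = O(1)$, completing the argument.

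The main obstacle lies in controlling the $\delta$-dependence finely enough to end up with $\delta^{-\varepsilon}$ rather than $\delta^{-O(1)} \log^{O(1)}(1/\delta)$. Each dyadic refinement on a large prime potentially costs a logarithmic factor, and naively summing these over all participating primes would blow up. Organizing the refinement carefully---so that only an $\varepsilon$-scale loss accumulates across all primes, and the small-prime losses concentrate cleanly in the $1000^{\#\mathscr{P}_{\sml}}$ factor---is the most delicate technical component, and is exactly where the structural insight of Koukoulopoulos--Maynard (together with the extra combinatorial arguments promised in the abstract) will be essential.
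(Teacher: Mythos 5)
Your quality-function framing $q(G)=|E|^2D^2/(|A||B|XY)$ and the idea of regularizing the GCD graph via pigeonhole on prime exponent profiles is the right starting point, and it does correspond (in spirit) to the paper's device of a counterexample minimal in $\#\mathscr{P}(A\cup B)$. The paper reaches essentially the same intermediate structure you describe --- after the refinement, the edges $(a,b)\in\Omega'$ satisfy $|v_p(a/N)|+|v_p(b/N)|\leq 1$ for all $p$ and a fixed integer $N$ (Proposition \ref{minimal-counterexample}), which is the ``structured divisibility condition with essentially fixed factorization profiles'' in your sketch. (One technical discrepancy: the paper does not split the regularization into small-prime and large-prime cases with a dyadic pigeonhole on $\min(v_p(a),v_p(b))$; rather it uses a single concentration inequality on $\Z^2$, Lemma \ref{conc-lem}, for every prime uniformly, and the small-prime factor $1000^{\#\mathscr{P}_{\sml}}$ enters through the induction hypothesis, not through counting exponent classes.)

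The genuine gap is in the endgame. You write that after regularization ``the core KM-type inequality \dots should give $q(G)=O(1)$,'' but there is no such off-the-shelf inequality: the paper states explicitly that the content of Proposition \ref{not-counterexample} ``uses some combinatorial arguments and is not found in \cite{km}.'' The missing ingredient is the \emph{defect} $a_*:=\prod_{p:\,v_p(a/N)\neq 0}p$, the identity $a_*b_*=ab/\gcd(a,b)^2$ valid for $(a,b)\in\Omega'$, and, critically, Lemma \ref{count-defect}: the number of $a\in A'$ with $a_*\leq T$ is at most $2T$. This lemma is not a triviality --- the map $a\mapsto a_*$ need not be injective --- and its proof is where the dyadic hypothesis $A\subset[X,2X]$ is used in an essential way, via the split $a_*=a_+a_-$, $a_+/a_-=a/N$, which pins $a_+$ and $a_-$ to short intervals. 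Your proposal never introduces anything playing this role, and the appeal to ``density of multiples of $p$ close to $1/p$'' does not apply to arbitrary subsets of $[X,2X]$. That this cannot be swept under the rug is shown by the example at the end of Section \ref{end-rem}: without the dyadic range the conclusion actually \emph{fails}, so any argument that never uses $A\subset[X,2X]$ in the final counting step (as yours does not) cannot close.
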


Let us make some remarks on this theorem. 

\emph{1.} This obviously implies the cruder bound $|A||B| \ll \delta^{-2 - \eps} XY/D^2$, mentioned in the abstract. The more precise form we have stated seems of little additional interest in its own right, but is critical for the proof. Perhaps the most natural case is when $A = B$ and $X = Y$, when the result says the following: if, for a proportion $\delta$ of all pairs $(a,a') \in A \times A$ we have $\gcd(a, a') \geq D$, then $|A| \ll \delta^{-1-\eps} X/D$. 

\emph{2.} We believe that the result is new even when $\delta = 1$, that is to say when $\gcd(a,b) \geq D$ for \emph{all} $a \in A$ and $b \in B$. In this case, the result is clearly sharp up to a multiplicative constant. Indeed, assuming that $D$ is an integer, we may take $A = \{ x \in [X, 2X] : D | x\}$ and $B = \{y \in [Y, 2Y] : D | y\}$. One might wonder whether all the tight examples have approximately such a structure; however, Chow has constructed a different family of tight examples for $\delta = 1$ (see \cite[Section 15]{km}) in which there is no single $d \gg D$ that divides a positive proportion of $A$ and $B$. 

\emph{3.} For $\delta \in (0,1)$ the result is also sharp for a wide range of parameters, apart from the factor of $\delta^{-\eps}$. To see this, let $D \geq \delta^{-1}$ be given, set $D_0 := \lfloor \delta D\rfloor$, and consider the sets $A = B = \{ x \in [X, 2X] : D_0 | x\}$. Evidently $|A||B| \sim \delta^{-2} X^2/D^2$. However, if $x = D_0 m, x' = D_0 m'$ with $m, m' \in [X/D_0, 2X/D_0]$ and $\gcd(m, m') \geq D/D_0$ then $x, x' \in A$ and $\gcd(x,x') \geq D$.  The proportion of pairs of integers with gcd $k$ is $1/k^2 \zeta(2)$, and so the proportion of pairs of integers with gcd $\geq k$ is $\gg 1/k$. It follows (at least if $X/D_0$ is big enough compared to $D/D_0$) that the number of such pairs $(x, x')$ is $\gg \delta |A||B|$.

\emph{4.} When $A = B$, $X = Y$ and $\delta = 1$, the result says the following: if $\gcd(a,a') \geq D$ for all $a, a' \in A$, then $|A| \ll X/D$. However (we are rather embarrassed to admit) Zachary Chase pointed out to the authors that this particular result is trivial, because the assumption implies that $|a - a'| \geq D$ whenever $a \neq a'$. This argument does not, however, appear to extend to the other cases.

\emph{5.} A straightforward dyadic decomposition argument would allow one to establish similar results under the assumption that $A \subset [X]$ and $B \subset [Y]$. We leave the details to the reader.\\

\emph{Notation.} Our notation is standard. If $p$ is a prime and $a \in \Z$, we write $v_p(a)$ for the largest $k$ such that $p^k | a$. We extend this to rationals by $v_p(a/b) = v_p(a) - v_p(b)$. Implied constants in the $O(), \ll$ and $\gg$ notations are absolute (though they may depend on $\eps$, which is fixed throughout the paper).\vspace*{10pt}

\emph{Strategy.} Our strategy for proving Theorem \ref{mainthm} is essentially to proceed by induction on $\#\mathscr{P}(A \cup B)$, but we will phrase the argument in terms of a hypothetical counterexample with minimal $\# \mathscr{P}(A \cup B)$. The first main business is to show that such a minimal counterexample has a very specific structure.

\begin{proposition}\label{minimal-counterexample}
Suppose we have a counterexample to Theorem \ref{mainthm} with the set $\mathscr{P}(A \cup B)$ minimal in size. Let $\Omega \subset A \times B$, with $|\Omega| = \delta |A| |B|$, be the set of pairs for which $\gcd(a,b) \geq D$. Then there is $\Omega' \subset \Omega$, with $|\Omega' | \geq \frac{1}{2} |\Omega|$, and an integer $N$ such that the following is true. For all primes $p$ and for all $(a,b) \in \Omega'$ we have $|v_p(a/N)| + | v_p(b/N)| \leq 1$.\end{proposition}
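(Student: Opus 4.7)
The argument is by minimality of $\#\mathscr{P}(A\cup B)$. I aim to show that for each prime $p\in\mathscr{P}(A\cup B)$ there exist an exponent $k_p\geq 0$ and a subset $\Omega_p\subseteq\Omega$ such that, for every $(a,b)\in\Omega_p$, the pair $(v_p(a),v_p(b))$ lies in the five-point neighbourhood
\[ \mathcal{N}(k_p):=\{(k_p,k_p),\,(k_p,k_p\pm 1),\,(k_p\pm 1,k_p)\}. \]
Taking $\Omega'=\bigcap_p\Omega_p$ and $N=\prod_p p^{k_p}$ then gives the conclusion, since the condition $|v_p(a/N)|+|v_p(b/N)|\leq 1$ is equivalent to $(v_p(a),v_p(b))\in\mathcal{N}(k_p)$, provided we can arrange $|\Omega'|\geq\tfrac{1}{2}|\Omega|$.

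\textbf{Reduction principle.} First I would establish a ``strip off $p$'' reduction. For any pattern $(\lambda,\mu)\in\mathbf{Z}_{\geq 0}^2$ set $\Omega^{(\lambda,\mu)}=\{(a,b)\in\Omega:v_p(a)=\lambda,\,v_p(b)=\mu\}$ and form $A'=A^{(\lambda)}/p^\lambda$, $B'=B^{(\mu)}/p^\mu$, $X'=X/p^\lambda$, $Y'=Y/p^\mu$, $D'=D/p^{\min(\lambda,\mu)}$. Then $p\notin\mathscr{P}(A'\cup B')$, so $\#\mathscr{P}$ strictly decreases. Writing the counterexample hypothesis as $G:=\delta^{2+\varepsilon}|A||B|D^2/(XY)>1000^{1+\#\mathscr{P}_{\sml}(A\cup B)}$, a short computation yields
\[ \frac{G'}{G}\;\geq\;\alpha^{2+\varepsilon}\,p^{|\lambda-\mu|},\qquad \alpha:=|\Omega^{(\lambda,\mu)}|/|\Omega|. \]
The small-prime count in the bound drops by one when $p\leq p_0$ and is unchanged when $p>p_0$, so the reduced quadruple remains a counterexample (contradicting minimality) whenever $\alpha^{2+\varepsilon}p^{|\lambda-\mu|}\geq 10^{-3}$ or $\geq 1$ respectively. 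Hence, in a minimal counterexample,
\[ \alpha\;\ll\;p^{-|\lambda-\mu|/(2+\varepsilon)}\cdot 1000^{-\mathbf{1}[p\leq p_0]/(2+\varepsilon)} \quad\text{for every pattern}. \]

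\textbf{Choosing $k_p$.} This bound controls the aggregate mass of the ``off-diagonal'' patterns $(|\lambda-\mu|\geq 2)$, which cannot lie in any $\mathcal{N}(k)$. For the near-diagonal patterns $|\lambda-\mu|\leq 1$, I would pick $k_p$ to maximise $|\Omega\cap\mathcal{N}(k_p)|$; the same reduction applied to a single representative pattern inside a competing neighbourhood $\mathcal{N}(k')$ with $|k'-k_p|\geq 2$ then forces those competing neighbourhoods to carry negligible mass. Setting $\Omega_p:=\Omega\cap\mathcal{N}(k_p)$ concludes the per-prime analysis.

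\textbf{Main obstacle.} The delicate step is to verify $\sum_{p\in\mathscr{P}(A\cup B)}|\Omega\setminus\Omega_p|/|\Omega|\leq\tfrac{1}{2}$. For the at most $\pi(p_0)=O(1)$ small primes the factor-$1000$ gain per prime gives per-prime losses of bounded size, whose finite sum is easy to control. For large primes I would combine two ingredients: the $\varepsilon$-saving renders $\sum_{p>p_0} p^{-|\lambda-\mu|/(2+\varepsilon)}$ summable when $|\lambda-\mu|\geq 2$ and $p_0$ is large enough, while the elementary bound $|A^{(\lambda)}||B^{(\mu)}|\leq 4XY/p^{\lambda+\mu}$ combined with the counterexample hypothesis $|A||B|\gg\delta^{-2-\varepsilon}XY/D^2$ yields $\alpha\ll\delta^{1+\varepsilon}D^2/p^{\lambda+\mu}$, which is small whenever $p^{\lambda+\mu}$ dominates $D^2$. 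Patching these together, especially across the intermediate range $p_0<p\ll D$ where neither estimate is tight in isolation, and arranging the final bound $|\Omega'|\geq\tfrac{1}{2}|\Omega|$, is where I expect the bulk of the technical work to sit.
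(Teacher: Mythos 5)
Your reduction step is correct and closely parallels the paper: you strip the prime $p$ from a single pattern $(\lambda,\mu)$, invoke minimality, and correctly derive $\alpha^{2+\eps}p^{|\lambda-\mu|} < 1000^{-\mathbf{1}[p\leq p_0]}$, i.e. $\alpha \ll p^{-|\lambda-\mu|/(2+\eps)}$ per pattern. The overall plan (take $k_p$, intersect over $p$, set $N=\prod p^{k_p}$) is also the paper's. However, your ``Main obstacle'' section contains a fatal error and the missing idea that the paper supplies is precisely the one you would need.

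The concrete problem: you claim $\sum_{p>p_0}p^{-|\lambda-\mu|/(2+\eps)}$ is summable for $|\lambda-\mu|\geq 2$. It is not: the exponent is $\tfrac{2}{2+\eps}<1$, so the series diverges, and your per-pattern bound alone can never deliver $\sum_p |\Omega\setminus\Omega_p|/|\Omega|\leq\tfrac12$. Your second ingredient (using $|A^{(\lambda)}||B^{(\mu)}|\leq 4XY/p^{\lambda+\mu}$) helps only once $p^{\lambda+\mu}\gg D^2$, and you yourself flag the range $p_0<p\ll D$ as unresolved --- it is. The crucial ingredient you dropped is the density factor: in your computation you bounded $\bigl(|A||B|/(|A_\lambda||B_\mu|)\bigr)^{1+\eps}\geq 1$ and discarded it, but the paper keeps it, obtaining the refined per-pattern bound $\mu(i,j)\leq 10^{-\mathbf{1}[p\leq p_0]}(\alpha_i\beta_j)^{\frac{1+\eps}{2+\eps}}p^{-\frac{|i-j|}{2+\eps}}$ with $\alpha_i=|A_i|/|A|$, $\beta_j=|B_j|/|B|$. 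This extra weight is what makes the aggregate off-neighbourhood mass controllable: the paper's Lemma~\ref{conc-lem}, a concentration result for measures on $\mathbf{Z}^2$ dominated by $c\lambda^{|i-j|}x_iy_j$ with $\Vert x\Vert_{\ell^{q'}}=\Vert y\Vert_{\ell^{q'}}=1$, upgrades the per-pattern estimate to the aggregate bound $\sum_{|i-k|+|j-k|\geq 2}\mu(i,j)\ll\lambda^{q+\eps}=p^{-1-\eps/(2+\eps)}\leq p^{-1-\eps/3}$, which \emph{is} summable over $p$, and one then takes $p_0$ large. The same lemma also yields the lower bound $c\geq\tfrac19$, which shows that in fact no prime $p\leq p_0$ can belong to $\mathscr{P}(A\cup B)$ in a minimal counterexample, so your informal handling of small primes (``per-prime losses of bounded size'') is replaced by the cleaner statement that there are none. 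In short: the reduction you wrote is sound, but you need to retain the density weights and prove a concentration lemma of the type in Section~2 to close the gap; without it, the summability step fails.
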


Though such a statement does not appear explicitly in their work, this proposition should be considered essentially due to Koukoulopoulos and Maynard \cite{km}. We will give a fairly short, self-contained proof. On
some level this is equivalent to the argument of \cite{km}, but we phrase things
rather differently.  

To complete the proof of Theorem \ref{mainthm}, we prove the following counterpart to Proposition \ref{minimal-counterexample}.

\begin{proposition}\label{not-counterexample}
Suppose that $A \subset [X,2X]$, $B \subset [Y, 2Y]$, $\Omega, D,\delta, N$ are as in Proposition \ref{minimal-counterexample}. Then $|A||B| \leq 1000 \delta^{-2} XY/D^2$.
\end{proposition}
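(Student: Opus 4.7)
\emph{Plan.} The plan is to use the structural conclusion of Proposition~\ref{minimal-counterexample} to obtain an upper bound of the form $|\Omega'| \leq O(XY/D^2)$, and then convert this to a bound on $|A||B|$ via $|\Omega'|\geq\delta|A||B|/2$.

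First, I would extract divisibility consequences of the valuation hypothesis. For any $(a,b)\in\Omega'$ and any prime $p$, the inequality $|v_p(a/N)|+|v_p(b/N)|\leq 1$ combined with $v_p(a),v_p(b)\geq 0$ forces $v_p(\gcd(a,b))=\min(v_p(a),v_p(b))\in\{v_p(N),v_p(N)-1\}$ for every prime. Therefore $g:=\gcd(a,b)$ divides $N$, $Q:=N/g$ is squarefree, and $g\geq D$ translates to $Q\leq N/D$. Writing $a=gu$, $b=gv$, a prime-by-prime inspection shows that $u,v$ are squarefree and coprime, and that each prime of $Q$ divides exactly one of $u$ or $v$; this yields a coprime factorization $Q=Q_uQ_v$ with $Q_u\mid u$, $Q_v\mid v$.

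Next, I would count $|\Omega'|$ via this parameterization. Writing $u=Q_u u'$ and $v=Q_v v'$ with $u',v'$ squarefree and coprime to $Q$, the conditions $a\in[X,2X]$ and $b\in[Y,2Y]$ (using $g=N/Q$) become $u'\in[XQ_v/N,2XQ_v/N]$ and $v'\in[YQ_u/N,2YQ_u/N]$. Hence
\[
|\Omega'|\;\leq\;\sum_{\substack{Q\mid N\\ Q\text{ squarefree}\\ Q\leq N/D}}\;\sum_{\substack{Q=Q_uQ_v\\ \gcd(Q_u,Q_v)=1}}\Big(\frac{XQ_v}{N}+1\Big)\Big(\frac{YQ_u}{N}+1\Big).
\]
Expanding, the main term is $(XY/N^2)\sum_Q\tau(Q)Q$, with lower-order terms involving $\sigma(Q)$ and $\tau(Q)$ summed over squarefree $Q\mid N$ with $Q\leq N/D$.

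The main obstacle will be to bound this right-hand side by $O(XY/D^2)$ without incurring loss factors like $2^{\omega(N)}$. The additional input I would exploit is the inequality $N\leq 4XY/D$, which follows from $ab=g^2uv\geq g\cdot N$ (using $Q\mid uv$, hence $uv\geq N/g$) combined with $ab\leq 4XY$ and $g\geq D$. Combining this bound on $N$ with a careful treatment of the truncated divisor sum --- for instance by reversing the order of summation so as to fix a divisor of $N$ first, or by applying Cauchy--Schwarz across the factorization $Q=Q_uQ_v$ --- should eliminate the $\omega(N)$-dependence and give $|\Omega'|\leq 500\,XY/D^2$. The conclusion then follows: $|A||B|\leq 2|\Omega'|/\delta\leq 1000\,\delta^{-1}XY/D^2\leq 1000\,\delta^{-2}XY/D^2$ (using $\delta\leq 1$).
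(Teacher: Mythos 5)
Your structural analysis of $\Omega'$ is correct and is essentially a rephrasing of what the paper does: with $g=\gcd(a,b)$ and $Q=N/g$, your quantities $Q_v u'$ and $Q_u v'$ are exactly the paper's ``defects'' $a_*$ and $b_*$ (the products of primes where $v_p(a/N)\neq 0$, resp.\ $v_p(b/N)\neq 0$), and the identity $a_*b_*=ab/\gcd(a,b)^2\leq 4XY/D^2$ is the crux in both accounts. However, your counting step is where the argument breaks down, and the breakdown is not a technical detail you can patch with ``a careful treatment of the truncated divisor sum.''

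The plan to prove $|\Omega'|\leq 500\,XY/D^2$ and then divide by $\delta/2$ cannot work. First, it would yield $|A||B|\leq 1000\,\delta^{-1}XY/D^2$, a conclusion strictly stronger in $\delta$ than the proposition, and incompatible with the $\delta^{-2}$ sharpness discussed in Remark 3; the structured hypothesis of Proposition~\ref{minimal-counterexample} does not preclude $|\Omega'|\gg \delta^{-1}XY/D^2$ for small $\delta$. Second, even ignoring $\delta$, the bound $|\Omega'|=O(XY/D^2)$ is false: the defects $a_*$ can spread over many dyadic scales (take $a/N=u/w$ with $w$ ranging over scales; then $a_*=uw\approx w^2 X/N$ ranges over scales even with $a\in[X,2X]$ fixed), and summing over scales introduces at least a logarithmic loss, in the spirit of the $\log X$ loss exhibited in the counterexample at the end of Section~\ref{end-rem}. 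Your divisor sum $\sum_{Q\mid N,\,Q\leq N/D}\tau(Q)Q$ is not $O((N/D)^2)$ for general $N$, and neither the observation $N\leq 4XY/D$ nor Cauchy--Schwarz across $Q=Q_uQ_v$ repairs this, because your parameterization counts all conceivable $(Q,Q_u,Q_v,u',v')$ without using the key rigidity: that each $a\in A$ carries a \emph{single} pair $(a_+,a_-)$, so that the number of $a\in A$ with $a_*\leq T$ is at most $2T$ (the paper's Lemma~\ref{count-defect}, whose proof hinges on the identities $a_*=a_+a_-$ and $a_+/a_-=a/N$ together with the dyadic range $a\in[X,2X]$).

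The paper's route sidesteps bounding $|\Omega'|$ entirely. Using Lemma~\ref{count-defect}, it selects by averaging a vertex $a\in A$ with $\deg(a)\geq\delta|B|/4$ and $a_*\geq\delta|A|/8$, then among its neighbours a $b$ with $b_*\geq\delta|B|/8$; comparing with $a_*b_*\leq 4XY/D^2$ gives $|A||B|\leq 256\,\delta^{-2}XY/D^2$ directly, with no divisor sums, no dependence on $\omega(N)$, and naturally producing the $\delta^{-2}$. I suggest you replace your counting of $\Omega'$ with this extraction argument: prove the $2T$ bound on $\#\{a\in A':a_*\leq T\}$ from the bijectivity of $a\mapsto(a_+,a_-)$ and the containment $a_+\leq(2XT/N)^{1/2}$, $a_-\leq(NT/X)^{1/2}$, and then run the two-step greedy selection.
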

Evidently, this means that $A, B$ do not in fact give a counterexample to Theorem \ref{mainthm}. Combining Propositions \ref{minimal-counterexample} and \ref{not-counterexample} shows that no minimal counterexample to Theorem \ref{mainthm} exists, so Theorem \ref{mainthm} is true.

The proof of Proposition \ref{not-counterexample} uses some combinatorial arguments and is not found in \cite{km}.

\section{Concentrated measures on $\Z^2$}

In this section we prove a result about concentration of probability measures on $\Z^2$. It is the key technical ingredient in the proof of Proposition \ref{minimal-counterexample}, where it is used to concentrate the pair of valuation functions $(v_p(a),v_p(b))$ around a diagonal pair $(k,k)$. 

Here, as in the rest of the paper, we write $q = 2 + \eps$ and write $q'$ for the conjugate index to $q$ (i.e. $\frac{1}{q} + \frac{1}{q'} = 1$). 

\begin{lemma}\label{conc-lem}
Let $c \leq 1$ and $\lambda \leq \frac{4}{5}$. Suppose that $\mu$ is a finitely-supported probability measure on $\Z^2$. Suppose that there are sequences $x = (x_i)_{i \in \Z}$, $y = (y_j)_{j \in \Z}$ of non-negative reals such that $\Vert x \Vert_{\ell^{q'}(\Z)} = \Vert y \Vert_{\ell^{q'}(\Z)} = 1$, and such that for all $(i,j) \in \Z^2$ we have
\begin{equation}\label{key-bd} \mu(i,j) \leq c\lambda^{|i - j|} x_i y_j.\end{equation}
Then $c \geq \frac{1}{9}$, and $\mu$ is highly concentrated near some point $(k,k)$:
\begin{equation}\label{concentrate} \sum_{|i - k| + |j - k| \geq 2} \mu(i,j) \ll \lambda^{q + \eps}.\end{equation} 
\end{lemma}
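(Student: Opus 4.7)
The argument has two essentially independent parts: the lower bound $c \geq 1/9$ and the concentration estimate.

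For the lower bound, sum the pointwise hypothesis over all $(i,j) \in \Z^2$. The left-hand side is $\sum \mu = 1$. On the right, reindex by $m = i - j$:
\[
c\sum_{i,j}\lambda^{|i-j|} x_i y_j = c\sum_{m \in \Z}\lambda^{|m|}\sum_j x_{j+m}y_j.
\]
Using H\"older (either Cauchy--Schwarz combined with the embedding $\|x\|_2 \leq \|x\|_{q'} = 1$, valid since $q' < 2$, or H\"older directly with the conjugate pair $(q',q)$ and then $\|y\|_q \leq \|y\|_{q'} = 1$) gives $\sum_j x_{j+m} y_j \leq 1$. The geometric series $\sum_m \lambda^{|m|} = (1+\lambda)/(1-\lambda) \leq 9$ (since $\lambda \leq 4/5$) then yields $1 \leq 9c$.

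For the concentration, pick $k \in \Z$ maximising $x_k y_k$ and set $M := x_k y_k$ and $N(k) := \{(k,k), (k\pm 1, k), (k,k\pm 1)\}$; the aim is $\mu(N(k)^c) \ll \lambda^{q+\eps}$. Decompose the complement $N(k)^c = \{(i,j) : |i-k|+|j-k|\geq 2\}$ into three regions: the distant diagonal $\{i = j \neq k\}$, the distance-one off-diagonal outside $N(k)$, and the far off-diagonal $\{|i-j|\geq 2\}$. In each region the pointwise bound $\mu \leq c\lambda^{|i-j|}x_i y_j$ reduces the estimate to a weighted sum in $x, y$. The key tools for these sums are H\"older in each coordinate with the conjugate pair $(q', q)$, together with Young's convolution inequality applied to the autocorrelation $S_m := \sum_i x_i y_{i+m}$, which lives in $\ell^r$ with $r = (2+\eps)/\eps$ when $\|x\|_{q'}=\|y\|_{q'}=1$.

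The main obstacle is the sharp exponent $q+\eps = 2+2\eps$: the na\"ive bounds using only Cauchy--Schwarz and the $\ell^{q'}\hookrightarrow\ell^2$ embedding give $O(\lambda^2)$ on the far off-diagonal piece, and an additional saving of $\lambda^{2\eps}$ must be recovered. I expect this via a dichotomy on $M$. In the regime where $M$ is close to $1$, the $\ell^{q'}$-tails $\sum_{i \neq k} x_i^{q'} = 1 - x_k^{q'}$ and its $y$-analogue are small, and feeding these tail estimates into the three region bounds yields the sharp exponent. In the complementary regime where $M$ is bounded away from $1$, the diagonal mass $\sum_i \mu(i,i) \leq c\sum_i x_i y_i$ is too small to account for $\sum\mu = 1$ in view of the exponential off-diagonal decay, producing a contradiction. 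Threading this dichotomy uniformly through the three regions, with the H\"older and Young exponents tuned so that the resulting power of $\lambda$ comes out exactly to $q+\eps$, is the technically delicate step; the off-diagonal piece via Young with exponents $(q',q',r)$ and the cross-arm pieces via H\"older with $(q,q')$ should together be consistent with this sharp bound.
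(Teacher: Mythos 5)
Your lower bound argument for $c\geq 1/9$ is correct and is essentially the paper's, if anything slightly cleaner: the paper first raises $x_iy_j$ to the power $q'/2$ (valid since each term is $\leq 1$ and $q'<2$) and then applies Cauchy--Schwarz to $\sum_i (x_iy_{i+m})^{q'/2}$, whereas you can indeed go straight to $\sum_j x_{j+m}y_j \leq \|x\|_2\|y\|_2 \leq \|x\|_{q'}\|y\|_{q'}=1$. Either way this part is done.

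The concentration half, however, is only a plan, and the plan as stated has a structural flaw. You propose a dichotomy according to whether $M=x_ky_k$ is ``close to $1$'' or ``bounded away from $1$.'' But the estimate you want, $\sum_{|i-k|+|j-k|\geq 2}\mu(i,j)\ll\lambda^{q+\eps}$, requires $1-M$ to be small \emph{compared to a power of} $\lambda$, not merely smaller than some absolute threshold. A fixed-threshold dichotomy therefore cannot deliver the sharp exponent. The paper instead treats $\gamma:=1-M$ as a free parameter, derives from $x_k,y_k\geq 1-\gamma$ the tail bounds $\sum_{i\neq k}x_i^{q'}\ll\gamma$, $x_i\ll\gamma^{1/q'}$ for $i\neq k$ (and the same for $y$), bounds every piece of the partition in terms of powers of $\gamma$ and $\lambda$, and then uses $\sum_n\Sigma_n=1$ to force the \emph{quantitative} relation $\gamma\ll\lambda^q$. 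Substituting back gives the target bound. There is no contradiction branch; the balance equation itself pins down $\gamma$.

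Two further points. First, your three-region decomposition is too coarse: inside your ``far off-diagonal'' region $\{|i-j|\geq 2\}$ you have lumped together the points with $i=k$ (or $j=k$) and the points with both $i,j\neq k$. These need different treatment, since $x_k$ is close to $1$ and offers no tail saving; the paper splits these off as separate pieces ($S_2,S_3$, handled by a $(4/5)^{|j-k|}$ trick plus the $y$-tail bound, versus $S_1$ where both $x_i$ and $y_j$ save a factor $\gamma^{1/q'}$). Second, you also need a bound for the near-diagonal terms $(k,k\pm 1),(k\pm1,k)$ even though they are not in the final sum, because they appear in the balance equation $\sum_n\Sigma_n=1$; your sketch omits this. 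Finally, the appeal to Young's convolution inequality is not in the paper and is not obviously what you want: the exponent-chasing you would need to do with it has not been checked, whereas the paper's Cauchy--Schwarz computations are elementary and verifiably close the loop. I'd recommend replacing the dichotomy by the continuous $\gamma$-parameter bookkeeping and re-doing the region-by-region estimates in that framework.
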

\begin{proof}
We first prove the lower bound on $c$. Using \eqref{key-bd}, $\sum_{\Z^2} \mu(i,j) = 1$, $\sum_{m \in \Z} \lambda^{|m|} \leq 9$ and $q'  < 2$, we have
\[ \frac{1}{c} \leq \sum_{i,j} \lambda^{|i - j|}x_i y_j \leq \sum_{i,j} \lambda^{|i - j|} (x_iy_j)^{\frac{q'}{2}} \leq 9 \sup_m \sum_i (x_i y_{i+m})^{\frac{q'}{2}} \leq 9,\]
where the last step follows from the Cauchy-Schwarz inequality and the assumption $\Vert x \Vert_{\ell^{q'}(\Z)} = \Vert y \Vert_{\ell^{q'}(\Z)} = 1$. The lower bound on $c$ follows. 

Turning to \eqref{concentrate}, write $\sup_i x_i y_i = 1 - \gamma$ for some $\gamma\in [0,1]$, and suppose this supremum attained when $i = k$. Then $x_k, y_k \geq 1 - \gamma$, so 
\begin{equation}\label{key-ineq}  \sum_{i \neq k} x^{q'}_i, \sum_{j \neq k} y^{q'}_j \ll \gamma \; \mbox{and} \; x_i, y_j \ll \gamma^{1/q'} \; \mbox{when} \; i,j \neq k.\end{equation}
For $n = 1,2,3,4,5,6$ write $\Sigma_n := \sum_{(i,j) \in S_n} \mu(i,j)$, where $S_1,\dots, S_6$ are the following sets, which partition $\Z^2$:
\[ S_1 := \{(i,j)\in \Z^2 : i \neq j \neq k \neq i\}, \quad S_2 := \{(k, j) : |j - k| \geq 2\},\]
\[  S_3 := \{(i, k) : |i - k| \geq 2\}, \quad S_4 := \{(k, k \pm 1), (k \pm 1, k)\}, \quad S_5 := \{(i,i) : i  \neq k\}, \]  and finally $S_6 := \{(k,k)\}$. We bound the $\Sigma_n$ in turn. \vspace*{8pt}

\emph{Bound for $\Sigma_1$.} By \eqref{key-bd} and \eqref{key-ineq} we have
\[ \Sigma_1 \leq \sum_{\substack{ i,j \neq k \\ i\neq j}} \lambda^{|i - j|} x_i y_j \ll \gamma^{\frac{2}{q'} (1 - \frac{q'}{2})} \sum_{\substack{ i,j \neq k \\ i\neq j}} \lambda^{|i - j|} (x_iy_j)^{\frac{q'}{2}} =  \gamma^{\frac{2}{q'} -1} \sum_{m \neq 0} \lambda^{|m|}\!\! \!\! \sum_{i,i+m \neq k}  \!\!\! (x_i y_{i + m})^{\frac{q'}{2}}.  \] By Cauchy-Schwarz and \eqref{key-ineq}, for each fixed $m$ we have
\[ \sum_{i,i+m \neq k}  (x_i y_{i + m})^{\frac{q'}{2}} \leq (\sum_{i \neq k} x_i^{q'})^{1/2} (\sum_{j \neq k} y_j^{q'})^{1/2} \ll \gamma.\]
Since $\sum_{m \neq 0} \lambda^{|m|} \ll \lambda$, putting these together gives $\Sigma_1 \ll \lambda \gamma^{\frac{2}{q'}}$.\vspace*{8pt}

\emph{Bounds for $\Sigma_2,\Sigma_3$.} For $\Sigma_2$, we use the trivial bound $x_k \leq 1$ and \eqref{key-ineq} for $y_j$. This gives (using the assumption that $\lambda \leq \frac{4}{5}$)
\[ \Sigma_2 \leq \sum_{|j - k| \geq 2} (5\lambda/4)^{|j - k|} (4/5)^{|j-k|} y_j \ll \lambda^2 \!\!\!\sum_{|j - k| \geq 2} \!\!\! (4/5)^{|j - k|} y_j \ll \gamma^{\frac{1}{q'} (1 - \frac{q'}{2})}\lambda^2\!\!\! \sum_{|j - k| \geq 2} \!\!\! (4/5)^{|j - k|} y_j^{\frac{q'}{2}} .\]
By Cauchy-Schwarz and \eqref{key-ineq}, 
\[ \sum_{|j - k| \geq 2}  (4/5)^{|j - k|} y_j^{\frac{q'}{2}} \ll (\sum_{j \neq k} (4/5)^{2|j - k|})^{1/2} (\sum_{j \neq k} y_j^{q'})^{1/2} \ll \gamma^{1/2}.\]
Combining these bounds gives $\Sigma_2 \ll \gamma^{\frac{1}{q'}} \lambda^2$, and an essentially identical argument yields $\Sigma_3 \ll \gamma^{\frac{1}{q'}} \lambda^2$.\vspace*{8pt}

\emph{Bound for $\Sigma_4$.} From \eqref{key-bd}, \eqref{key-ineq} we immediately get $\Sigma_4 \ll \gamma^{\frac{1}{q'}} \lambda$.\vspace*{8pt}

\emph{Bound for $\Sigma_5$.} A trivial modification to the argument used for $\Sigma_1$ (allowing $i = j$, which gives just a term with $m = 0$) shows that $\Sigma_5 \ll \gamma^{\frac{2}{q'}}$.\vspace*{8pt}

\emph{Bound for $\Sigma_5 + \Sigma_6$.} By \eqref{key-bd} and the fact that $\sup_i x_i y_i = 1 - \gamma$, 
\[ \Sigma_5 + \Sigma_6 \leq \sum_i x_i y_i \leq (1 - \gamma)^{1 - \frac{q'}{2}} \sum_i (x_i  y_i)^{\frac{q'}{2}} \leq (1 - \gamma)^{1 - \frac{q'}{2}} \leq 1 - (1 - \frac{q'}{2})\gamma, \] where we used Cauchy-Schwarz yet again.\vspace*{8pt}

Putting all this together gives
\[ 1 = \sum_{n = 1}^6 \Sigma_n \leq 1 - (1 - \frac{q'}{2})\gamma + O(\lambda \gamma^{\frac{1}{q'}}) .\] This implies that $\lambda \gg \gamma^{1 - \frac{1}{q'}}$, i.e. $\gamma \ll \lambda^{q}$.  
Finally, we see that 
\[ \sum_{|i - k| + |j - k| \geq 2} \mu(i,j) = \Sigma_1 + \Sigma_2 + \Sigma_3 + \Sigma_5 \ll \lambda^{\frac{2q}{q'}} +\lambda^{2 + \frac{q}{q'}} \ll \lambda^{\frac{2q}{q'}} \] (since $q \leq 3$).
The result follows, noting that $\frac{2q}{q'} = q + \eps$. \end{proof}

\section{Properties of a minimal counterexample}\label{sec4}

We turn now to the proof of Proposition \ref{minimal-counterexample}. We first reduce matters to the following ``local'' statement at a single prime $p$.

\begin{proposition}\label{minimal-counterexample-p}
Suppose we have a counterexample to Theorem \ref{mainthm} with the set $\mathscr{P}(A,B)$ minimal in size. Let $\Omega \subset A \times B$, with $|\Omega| = \delta |A| |B|$, be the set of pairs for which $\gcd(a,b) \geq D$. Let $p \in \mathscr{P}(A,B)$ be a prime. Then $p > p_0(\eps)$, and there is $k_p \in \Z_{\geq 0}$ and $\Omega_p \subset \Omega$ such that for all $(a,b) \in \Omega_p$ we have $|v_p(a) - k_p| + | v_p(b) - k_p| \leq 1$, and such that $|\Omega \setminus \Omega_p| \ll  p^{-1 - \eps/3 } |\Omega|$ .
\end{proposition}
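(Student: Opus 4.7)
The plan is to apply Lemma \ref{conc-lem} to the probability measure $\mu$ on $\Z^2$ defined by
\[ \mu(i,j) := \frac{|\Omega_{ij}|}{|\Omega|}, \qquad \Omega_{ij} := \{(a,b) \in \Omega : v_p(a) = i,\ v_p(b) = j\}. \]
Its concentration conclusion will directly furnish the point $(k_p,k_p)$ and control $|\Omega \setminus \Omega_p|$, while the lower bound $c \geq \tfrac{1}{9}$ it provides will (in the case $p \leq p_0$) produce a contradiction and thereby establish $p > p_0$.

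The technical heart is to verify the hypothesis of Lemma \ref{conc-lem}. We propose
\[ \lambda := p^{-1/q}, \qquad x_i := \left(\frac{|A_i|}{|A|}\right)^{1/q'}, \qquad y_j := \left(\frac{|B_j|}{|B|}\right)^{1/q'}, \]
where $A_i := \{a \in A : v_p(a) = i\}$ and $B_j$ is defined analogously; note $\Vert x \Vert_{\ell^{q'}} = \Vert y \Vert_{\ell^{q'}} = 1$. For a given $(i,j)$, pass to the reduced sets $A_i' := p^{-i} A_i \subset [X/p^i, 2X/p^i]$ and $B_j' := p^{-j} B_j \subset [Y/p^j, 2Y/p^j]$, both coprime to $p$, so $\#\mathscr{P}(A_i' \cup B_j') < \#\mathscr{P}(A \cup B)$. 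Since $\gcd(p^i a', p^j b') = p^{\min(i,j)} \gcd(a',b')$, the reduced pairs realise at least a fraction $\delta_{ij} := |\Omega_{ij}|/(|A_i||B_j|)$ with gcd $\geq D/p^{\min(i,j)}$. Minimality of the counterexample lets us apply Theorem \ref{mainthm} to this reduced configuration, giving
\[ |A_i||B_j| \leq C\, \delta_{ij}^{-q}\, \frac{XY}{p^{|i-j|} D^2}, \qquad C := 1000^{1 + \#\mathscr{P}_{\sml}(A_i' \cup B_j')}, \]
where the edge case $D/p^{\min(i,j)} > \min(X/p^i, Y/p^j)$ is handled by truncating $D$ and absorbing a bounded constant. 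Combining with the counterexample hypothesis $|A||B| \geq c_0 \delta^{-q} XY/D^2$ for $c_0 := 1000^{1 + \#\mathscr{P}_{\sml}(A \cup B)}$ to cancel $XY/D^2$, and taking $q$-th roots using the identity $(1+\eps)/q = 1/q'$, delivers the desired pointwise bound with $c := (C/c_0)^{1/q}$.

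With the hypothesis of Lemma \ref{conc-lem} verified, we split into cases. If $p \leq p_0$ then $\#\mathscr{P}_{\sml}(A_i' \cup B_j') = \#\mathscr{P}_{\sml}(A \cup B) - 1$, so $C/c_0 = 10^{-3}$ and $c = 1000^{-1/q} < 1/10 < 1/9$ (using $q < 3$, which holds since $\eps < 1$). This contradicts the conclusion $c \geq 1/9$ of Lemma \ref{conc-lem}, so no minimal counterexample admits such a small $p$. Hence $p > p_0$, in which case $\mathscr{P}_{\sml}$ is unchanged on removing $p$, giving $C/c_0 = 1$ and $c = 1$; moreover $\lambda = p^{-1/q} \leq 4/5$ provided $p_0$ is sufficiently large. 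The concentration bound \eqref{concentrate} then yields $|\Omega \setminus \Omega_p|/|\Omega| \ll \lambda^{q+\eps} = p^{-1 - \eps/q} \leq p^{-1-\eps/3}$ upon setting $k_p$ to be the lemma's centre $k$. The main difficulty, and the reason the specific constant $1000^{1 + \#\mathscr{P}_{\sml}}$ is baked into Theorem \ref{mainthm}, is the precise calibration: it is exactly the constant which makes $c = 1$ for large $p$ (so that Lemma \ref{conc-lem} applies productively) and $c < 1/9$ for small $p$ (so that the lemma forces the lower bound $p > p_0$).
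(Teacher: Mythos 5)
Your proposal is correct and follows essentially the same route as the paper: the same measure $\mu(i,j)$, the same reduced sets $p^{-i}A_i$, $p^{-j}B_j$, the same invocation of the theorem on the reduced configuration via minimality, and the same application of Lemma \ref{conc-lem} with $\lambda = p^{-1/q}$, $x_i = \alpha_i^{1/q'}$, $y_j = \beta_j^{1/q'}$. Two small points. First, the constant $C$ in your bound depends on $(i,j)$, while Lemma \ref{conc-lem} needs a single $c$; this is resolved by the uniform inequality $\#\mathscr{P}_{\sml}(\bar{A}_i \cup \bar{B}_j) \leq \#\mathscr{P}_{\sml}(A \cup B) - 1_{p \leq p_0}$ (and note $C/c_0 \leq 10^{-3}$ rather than $= 10^{-3}$). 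Second, $\lambda = p^{-1/q} \leq 2^{-1/3} < 4/5$ already for all primes $p \geq 2$, so the hypothesis $\lambda \leq 4/5$ holds unconditionally, not merely for $p > p_0$. You do flag the edge case $D/p^{\min(i,j)} > \min(X/p^i, Y/p^j)$, which the paper passes over silently; be careful that ``absorbing a bounded constant'' there is not free, since in the $p \leq p_0$ branch the argument hinges on getting $c$ strictly below the lemma's threshold $1/9$, and an extra multiplicative factor could destroy that. When the edge case occurs with $\mu(i,j) > 0$ one has $D/p^{\min(i,j)} \leq 2\min(X/p^i, Y/p^j)$, so the loss is at most a factor $4^{1/q}$ on $c$; this needs a word to reconcile with the $1/10 < 1/9$ margin, but it is a bookkeeping matter shared with (and inherited from) the paper's own presentation rather than a defect peculiar to your write-up.
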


Proposition \ref{minimal-counterexample} follows quickly from this by taking $N = \prod_p p^{k_p}$ and $\Omega' := \bigcap_p \Omega_p$. We have
\[ |\Omega'| \geq |\Omega| \big(1 - O \big(\sum_{p > p_0} p^{-1 - \eps/3}\big)\big) \geq \frac{1}{2}|\Omega|\] if $p_0$ is big enough (this is the point at which $p_0$ is constrained).

It remains, then, to establish Proposition \ref{minimal-counterexample-p}. Fix, for the rest of this section, the prime $p$. For $i,j \in \Z_{\geq 0}$, we define $A_i := \{a \in A : v_p(a) = i\}$, $B_j := \{ b \in B : v_p(b) = j\}$, and write $\alpha_i := \frac{|A_i|}{|A|}$ and $\beta_j := \frac{|B_j|}{|B|}$ for the relative densities of these sets. Write $\mu(i,j) := \frac{|\Omega \cap (A_i \times B_j)|}{|\Omega|}$, thus $\mu$ is a finitely-supported probability measure on $\Z_{\geq 0}^2$.

For any $i,j$, consider the sets $\bar{A}_i := p^{-i} \cdot A_i$ and $\bar{B}_j := p^{-j}\cdot B_j$. These are sets of integers, coprime to $p$, with $\bar{A}_i \subset [\frac{X}{p^i}, \frac{2X}{p^i}]$, $\bar{B}_j \subset [\frac{Y}{p^j}, \frac{2Y}{p^j}]$ and $\gcd(x, y) \geq \frac{D}{p^{\min(i,j)}}$ whenever $x = p^{-i}a$, $y = p^{-j} b$ with $(a,b) \in \Omega$. 

By the minimality assumption, these sets cannot be a counterexample to Theorem \ref{mainthm}, and therefore we have the inequality
\begin{equation}\label{to-comp-1} |\bar{A}_i||\bar{B}_j| \leq (1000)^{1+ \# \mathscr{P}_{\sml}(\bar{A}_i,\bar{B}_j)}  (\frac{\delta \mu(i,j)}{\alpha_i \beta_j})^{-2 - \eps} \frac{\frac{X}{p^i} \frac{Y}{p^j}}{(\frac{D}{p^{\min(i,j)}})^2}. \end{equation}

On the other hand,
\begin{equation}\label{to-comp-2}  |\bar{A}_i||\bar{B}_j| = \alpha_i \beta_j |A| |B| \geq (1000)^{1+ \# \mathscr{P}_{\sml}(A,B)} \alpha_i \beta_j \delta^{-2 - \eps} \frac{XY}{D^2}   .\end{equation}
Note also that $\mathscr{P}(\bar{A}_i, \bar{B}_j) \subset \mathscr{P}(A,B) \setminus \{p\}$, and so 
\begin{equation}\label{ps-compare} \# \mathscr{P}_{\sml}(\bar{A}_i, \bar{B}_j) \leqslant \# \mathscr{P}_{\sml}(A, B) - 1_{p \leq p_0}.\end{equation}

Comparing \eqref{to-comp-1}, \eqref{to-comp-2} and \eqref{ps-compare} gives, for all $i$ and $j$, 
\begin{equation}\label{eq-22} \mu(i,j)  \leq  10^{-1_{p \leq p_0}} (\alpha_i \beta_j)^{\frac{1+\eps}{2 + \eps}} p^{-\frac{|i - j|}{2 + \eps}}, \end{equation} since $\varepsilon < 1$. 
This puts us in the situation covered by Lemma \ref{conc-lem}, with (in that lemma)
\[ q = 2 + \eps, q' = \frac{2 + \eps}{1 + \eps}, \lambda = p^{-\frac{1}{q}}, c = (\frac{1}{10})^{1_{p \leq p_0}}, x_i := \alpha_i^{1/q'}, y_j := \beta_j^{1/q'}.\] The hypotheses of the lemma are satisfied, since $p^{-\frac{1}{q}} \leqslant 2^{-\frac{1}{3}} \leqslant 4/5$. The lemma implies, first of all, that $c > \frac{1}{10}$; this immediately tells us that $p > p_0$. We conclude that there is some $k$ such that 
\begin{equation}\label{edge-defect} \sum_{|i - k| + |j - k| \geq 2} \mu(i,j) \ll  \lambda^{q + \eps} \ll  p^{-1 - \eps/3}. \end{equation}

This is precisely what is needed in Proposition \ref{minimal-counterexample-p}, taking \[\Omega_p = \bigcup_{|i - k| + |j - k| \leq 1} (\Omega \cap (A_i \times B_j)).\]

\section{Finishing the argument} \label{new-arg}

In this section we complete the proof of Theorem \ref{mainthm} by establishing Proposition \ref{not-counterexample}. That is, our task is as follows. Suppose that $A \subset [X, 2X]$, $B \subset [Y, 2Y]$, that $\Omega\ \subset A \times B$ has size $\frac{\delta}{2} |A| |B|$, and that $\gcd(a,b) \geq D$ whenever $(a,b) \in \Omega$. Suppose that there is some positive integer $N$ such that 
\begin{equation}\label{pivotal-repeat} |v_p(a/N)| + | v_p(b/N)| \leq 1 \end{equation} for all primes $p$ and for all $(a,b) \in \Omega$. We are to show that, under these assumptions, we have the bound
\begin{equation}\label{stronger} |A||B| \leq \frac{1000}{\delta^2} \frac{XY}{D^2}.\end{equation} 

Let us begin the proof. In the course of the argument it will be convenient to use a little of the language of graph theory. Thus if $a \in A$ then we write $\deg(a) := \# \{ b \in B : (a,b) \in \Omega\}$, and analogously for $b \in B$. Write $A' := \{ a \in A : \deg (a) > 0\}$ and $B' := \{b \in B : \deg(b) > 0\}$.

If $a \in A'$ then, by \eqref{pivotal-repeat}, $v_p(a/N) \in \{-1,0,1\}$ for all primes $p$. We define the \emph{defect} $a_*$ to be the product of all primes for which $v_p(a/N) \neq 0$. Now we make the crucial observation that if $(a,b) \in \Omega$ then
\begin{equation}\label{quad}   a_* b_* = \frac{ab}{\gcd(a,b)^2}. \end{equation}
To prove this, we take $p$-adic valuations. It is easily seen that 
\[ v_p(ab/\gcd(a,b)^2) = |v_p(a) - v_p(b)| = |v_p(a/N) - v_p(b/N)|, \]so we need only prove that 
\begin{equation}\label{to-prove} v_p(a_*) + v_p(b_*) = |v_p(a/N) - v_p(b/N)|\end{equation} whenever $(a,b) \in \Omega$.
This follows immediately from \eqref{pivotal-repeat}, noting that $v_p(a_*) = 1$ if $v_p(a/N) = \pm 1$ and $v_p(a_*) =0$ otherwise, and similarly for $v_p(b_*)$.

As a consequence of \eqref{quad} and our assumptions, we see that 
\begin{equation}\label{astarbstar} a_* b_* \leq \frac{4XY}{D^2}\end{equation} whenever $(a,b) \in \Omega$.
This would allow us to conclude very quickly, were it not for the fact that the map $a \mapsto a_*$ need not be injective (see Section \ref{end-rem} for some further remarks on this point). Fortunately, we have the following substitute for injectivity.

\begin{lemma}\label{count-defect}
Let $T \in \R_{> 0}$. The number of $a \in A'$ for which $a_* \leq T$ is at most $2T$.  Similarly, the number of $b \in B'$ for which $b_* \leq T$ is at most $2T$.
\end{lemma}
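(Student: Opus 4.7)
The plan is to inject $A'_T := \{a \in A' : a_* \leq T\}$ into the simple lattice set
\[ \Omega_T := \bigl\{(u,v) \in \Z_{\geq 1}^2 : uv \leq T,\ u/v \in [X/N, 2X/N]\bigr\} \]
and to bound $|\Omega_T|$ directly. For each $a \in A'$, picking any partner $b$ with $(a,b) \in \Omega$, hypothesis \eqref{pivotal-repeat} gives $v_p(a/N) \in \{-1, 0, 1\}$ for every prime $p$. Writing $a/N = u/v$ in lowest terms therefore forces $u, v$ to be coprime squarefree positive integers with $uv = a_*$, and the constraints $a \in [X, 2X]$ and $a_* \leq T$ translate to $u/v \in [X/N, 2X/N]$ and $uv \leq T$. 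Since $a = Nu/v$ recovers $a$ from $(u,v)$, the map $a \mapsto (u,v)$ is an injection from $A'_T$ into $\Omega_T$.

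For the counting step I would fix $v \in \Z_{\geq 1}$ and observe that the valid $u$'s lie in $[vX/N, 2vX/N] \cap [1, T/v]$, an interval of length at most $\min(vX/N, T/v - vX/N) \leq \min(vX/N, T/v) \leq \sqrt{XT/N}$ by AM--GM. Hence the number of integer $u$'s is at most $\sqrt{XT/N} + 1$. The interval is empty unless $v \leq V_2 := \sqrt{NT/X}$ (otherwise $vX/N > T/v$) and $v \leq T$ (otherwise $uv > T$ for $u \geq 1$), so writing $V' := \min(V_2, T)$ one gets
\[ |\Omega_T| \leq V'\bigl(\sqrt{XT/N} + 1\bigr) = V'\sqrt{XT/N} + V'. \]

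Finally, $V'\sqrt{XT/N} \leq T$ in both cases: if $V' = V_2$ then $V_2\sqrt{XT/N} = \sqrt{NT/X \cdot XT/N} = T$, while if $V' = T < V_2$ then $\sqrt{XT/N} < 1$ and the product is $< T$. Combined with $V' \leq T$ this gives $|\Omega_T| \leq 2T$. The bound for $b \in B'$ follows by the identical argument with $(B,Y)$ in place of $(A,X)$.

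The initially tempting point of attack is to exploit the extra constraints $v \mid N$, squarefreeness of $u$ and $v$, and coprimality of $u$ with $v$ in the counting step; the mildly surprising feature of the argument is that none of these refinements are needed, since the dyadic ratio constraint $u/v \in [X/N, 2X/N]$ combined with $uv \leq T$ already squeezes the unrestricted lattice count $|\Omega_T|$ down to $2T$ via a single application of AM--GM.
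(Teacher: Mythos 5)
Your proof is correct and follows essentially the same route as the paper: both decompose $a_* = a_+ a_-$ (your $uv$), observe that $a_+/a_- = a/N$ so the pair determines $a$, and then count the lattice points constrained by $uv \leq T$ and $u/v \in [X/N, 2X/N]$. The paper bounds $a_+ \leq (2XT/N)^{1/2}$ and $a_- \leq (NT/X)^{1/2}$ separately and multiplies the two ranges to get $\sqrt{2}\,T$, whereas your slice-by-$v$ count with the $+1$ per row is slightly more elaborate but lands at the same $2T$.
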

\begin{proof}
If $a \in A'$, write $a_+$ for the product of all primes with $v_p(a/N) = 1$, and $a_-$ for the product of all primes with $v_p(a/N) = -1$. Thus 
\begin{equation}\label{apm} a_* = a_+ a_-.\end{equation}
Since $v_p(a/N) \in \{-1,0,1\}$, we have
\begin{equation}\label{ratio-a} \frac{a_+}{a_-} = \frac{a}{N}.\end{equation} 
Since $A \subset [X, 2X]$, it follows by multiplying \eqref{apm} and \eqref{ratio-a} that if $a_* \leq T$ then
\begin{equation}\label{aplus} a_+ \leq (\frac{aT}{N})^{1/2} \leq  (\frac{2XT}{N})^{1/2}. \end{equation} Similarly, dividing \eqref{apm} by \eqref{ratio-a}, we see that if $a_* \leq T$ then
\begin{equation}\label{aminus} a_- \leq (\frac{NT}{a})^{1/2} \leq (\frac{NT}{X})^{1/2}.\end{equation}
It follows from \eqref{aplus}, \eqref{aminus} that the number of choices for the pair $(a_+, a_-)$ is at most $2T$. However, if we know $a_+, a_-$ and $N$ then we can recover $a$ uniquely, so the map $a \mapsto (a_+, a_-)$ is injective. The proof for $B'$ is the same.\end{proof}

Now we finish the argument. By a standard averaging argument there is a set $\tilde A \subset A$ with $|\tilde A| \geq \delta |A|/4$ such that $\deg(a) \geq \delta |B|/4$ for all $a \in \tilde A$. Clearly $\tilde A \subset A'$, so by Lemma \ref{count-defect} there is some $a \in \tilde A$ such that 
\begin{equation}\label{a-star-lower} a_* \geq \delta|A|/8.\end{equation}
Set $\tilde B := \{ b \in B : (a,b) \in \Omega\}$. Thus $|\tilde B| \geq \delta |B|/4$. Clearly $\tilde B \subset B'$, so by Lemma \ref{count-defect} there is some $b \in \tilde B$ such that 
\begin{equation}\label{b-star-lower} b_* \geq \delta|B|/8.\end{equation}
By construction we have $(a,b) \in \Omega$, so we have the upper bound \eqref{astarbstar}.

Comparing \eqref{astarbstar}, \eqref{a-star-lower}, \eqref{b-star-lower} immediately yields \eqref{stronger}.

\section{Further results and remarks} \label{end-rem}
Suppose that $A$ and $B$ are finite sets of square-free positive integers. In this instance, we may assume that the positive integer $N$ that satisfies \eqref{pivotal-repeat} is also square-free, and thus the map $a \mapsto a_*$ is injective, since knowing $N$ and $a_*$ determines $a$. This enables us to circumvent Lemma \ref{count-defect}, and prove the following theorem. 

\begin{theorem}\label{square-free result}
Let $Q\in [1, \infty)$ and $\delta \in (0,1]$. Suppose that $A,B$ are finite sets of square-free positive integers with the following property: for at least $\delta \vert A\vert \vert B\vert$ pairs $(a,b)\in A \times B$, $ab/\gcd(a,b)^2 \leqslant Q$. Then we have the bound \[ \vert A\vert \vert B\vert \leqslant (1000)^{1 + \# \mathscr{P}_{\sml}(A \cup B)} \delta^{-2- \varepsilon} \frac{Q}{4}.\] 
\end{theorem}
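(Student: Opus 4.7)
The plan is to mimic the proof of Theorem~\ref{mainthm} essentially verbatim, with $Q$ playing the role of $4XY/D^2$; the substantive change appears only in the endgame, where Lemma~\ref{count-defect} is superseded by injectivity of the defect map $a\mapsto a_*$ in the square-free setting.

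Assume for contradiction that a counterexample exists with $\mathscr{P}(A\cup B)$ minimal, and let $\Omega\subseteq A\times B$ of size $\delta|A||B|$ consist of the pairs satisfying $ab/\gcd(a,b)^2\leq Q$. Fix a prime $p\in\mathscr{P}(A\cup B)$; since $a,b$ are square-free we have $v_p(a),v_p(b)\in\{0,1\}$, and we form $A_i,B_j,\alpha_i,\beta_j,\mu(i,j)$ for $i,j\in\{0,1\}$ exactly as in Section~\ref{sec4}. For $(a,b)\in\Omega\cap(A_i\times B_j)$ the scaled pair $(x,y)=(p^{-i}a,p^{-j}b)$ consists of square-free integers coprime to $p$, and a direct $p$-adic computation gives $xy/\gcd(x,y)^2=p^{-|i-j|}\,ab/\gcd(a,b)^2\leq Q/p^{|i-j|}$. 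Applying minimality to $\bar A_i=p^{-i}A_i$, $\bar B_j=p^{-j}B_j$ and comparing against the counterexample hypothesis on $A,B$, together with $\#\mathscr{P}_{\sml}(\bar A_i\cup\bar B_j)\leq\#\mathscr{P}_{\sml}(A\cup B)-1_{p\leq p_0}$, reproduces \eqref{eq-22} unchanged. Lemma~\ref{conc-lem} then yields, for each $p\in\mathscr{P}(A\cup B)$, both $p>p_0$ and an integer $k_p\in\{0,1\}$ with $\Omega_p\subseteq\Omega$ satisfying $|v_p(a)-k_p|+|v_p(b)-k_p|\leq 1$ on $\Omega_p$ and $|\Omega\setminus\Omega_p|\ll p^{-1-\eps/3}|\Omega|$. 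Setting $N=\prod_{p\,:\,k_p=1}p$ (square-free) and $\Omega'=\bigcap_p\Omega_p$, one obtains $|\Omega'|\geq\delta|A||B|/2$ and $|v_p(a/N)|+|v_p(b/N)|\leq 1$ for every prime $p$ and every $(a,b)\in\Omega'$.

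Defining the defect $a_*$ as in Section~\ref{new-arg}, the identity \eqref{quad} gives $a_*b_*\leq Q$ on $\Omega'$. The key new feature is that $a\mapsto a_*$ is now injective: $N$ and $a$ are both square-free, so $v_p(a)$ equals $v_p(N)$ when $p\nmid a_*$ and equals $1-v_p(N)$ when $p\mid a_*$, recovering $a$ uniquely from $(N,a_*)$. A standard Markov averaging furnishes $\tilde A\subseteq A$ with $|\tilde A|\geq\delta|A|/4$ and $\deg_{\Omega'}(a)\geq\delta|B|/4$ for every $a\in\tilde A$; by injectivity at most $T$ elements of $A'$ have $a_*\leq T$, so taking $T$ just below $\delta|A|/4$ yields some $a\in\tilde A$ with $a_*\geq\delta|A|/4$. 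The same reasoning on $\tilde B=\{b:(a,b)\in\Omega'\}$ produces $b\in\tilde B$ with $b_*\geq\delta|B|/4$, whereupon $a_*b_*\leq Q$ gives $|A||B|\leq 16Q/\delta^2$. Since $\delta\leq 1$ this is dominated by $(1000)^{1+\#\mathscr{P}_{\sml}(A\cup B)}\delta^{-2-\eps}Q/4$, contradicting the counterexample hypothesis.

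The main obstacle is simply careful bookkeeping: one needs to verify that the scaling $Q\mapsto Q/p^{|i-j|}$ in the localization step carries exactly the right power of $p$ to reproduce \eqref{eq-22}, so that Lemma~\ref{conc-lem} can be invoked with no modification. Once this is checked, injectivity makes the final step strictly simpler than Section~\ref{new-arg}, where Lemma~\ref{count-defect} was forced to lose a factor of $2$ in order to count pairs $(a_+,a_-)$.
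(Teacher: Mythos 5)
Your proposal is correct and follows the paper's proof essentially verbatim: you run the minimal-counterexample and concentration machinery (Lemma \ref{conc-lem} via the reproduction of \eqref{eq-22} with $Q/p^{|i-j|}$ playing the role of $4XY/(Dp^{\min(i,j)})^{-2}p^{-i-j}$), note that $k_p\in\{0,1\}$ forces $N$ square-free, and use injectivity of $a\mapsto a_*$ in place of Lemma \ref{count-defect} to obtain $|A||B|\leq 16\delta^{-2}Q$, contradicting the counterexample hypothesis. The only cosmetic difference is that you spell out the localization $xy/\gcd(x,y)^2 = p^{-|i-j|}\,ab/\gcd(a,b)^2$ and the injectivity argument in slightly more detail than the paper, which simply says ``mutatis mutandis.''
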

\noindent Of course this result implies the cruder bound $\vert A\vert \vert B\vert \ll \delta^{-2- \varepsilon} Q$.  It also implies Theorem \ref{mainthm}, upon taking $Q = 4XY/D^2$. 

\begin{proof}
The proof of Proposition \ref{minimal-counterexample} holds mutatis mutandis. Analysing the minimal counterexample as before, we conclude that $a_* b_* \leqslant Q$ (by analogy with \eqref{astarbstar}). Using graph theoretic language as before, there exists a set $\tilde{A} \subset A$ with $\vert \tilde{A}\vert \geqslant \delta\vert A\vert / 4$ such that $\deg(a) \geqslant \delta \vert B\vert/4$ for all $a \in \tilde{A}$. Since $a \mapsto a_*$ is injective, there is some $a \in \tilde{A}$ for which $a_* \geqslant \delta \vert A\vert/4$. Letting $\tilde{B}: = \{b \in B: (a,b) \in \Omega\}$, we have $\vert \tilde{B}\vert \geqslant \delta \vert B\vert/4$ and for all $b \in \tilde{B}$ we have $b_* \leqslant Q/a_*$. Therefore, since $b \mapsto b_*$ is injective, we have \[\frac{\delta \vert B\vert}{4} \leqslant \vert \tilde{B}\vert \leqslant \max_{b_*: b \in \tilde{B}} b^* \leqslant \frac{Q}{a_*} \leqslant \frac{4Q}{\delta \vert A\vert}.\] This rearranges to $\vert A\vert \vert B\vert \leqslant 16 \delta^{-2} Q$, which shows that the minimal counterexample is not in fact a counterexample, thus settling the theorem. 
\end{proof}

One might wonder whether the bound $\vert A\vert \vert B\vert \ll \delta^{-2 - \varepsilon} Q$ holds for general finite sets of integers $A$ and $B$ (not just for square-frees). However there is a counterexample to this assertion, even with $\delta = 1$, given by \[ A = B = \Big\{\Big( \prod\limits_{p \leqslant X} p\Big) \frac{m}{n}:  mn \leqslant X, \, \mu^2(m) = \mu^2(n) = 1, \, \gcd(m,n) = 1\Big\}.\] One may establish that for all $(a_1,a_2) \in A \times A$ one has the bound $a_1a_2/\gcd(a_1,a_2)^2 \leqslant X^2$. Yet $\vert A\vert \gg X \log X$. 

By this, one notes that the use of dyadic ranges in the proof of Lemma \ref{count-defect} was critical.


\begin{thebibliography}{99}

\bibitem{km} D.~Koukoulopoulos and J.~Maynard, \emph{On the Duffin-Schaeffer Conjecture,} Annals of Mathematics, 192(1), pp. 251-307.

\end{thebibliography}
\end{document}